\documentclass[letterpaper, 10 pt, conference]{ieeeconf}  

\IEEEoverridecommandlockouts                              

\overrideIEEEmargins                                      




\RequirePackage{graphicx}

\usepackage[utf8]{inputenc}
\usepackage[T1]{fontenc} 
\usepackage{hyperref}   
\usepackage{url}         
\usepackage{booktabs}  
\usepackage{amsfonts}      
\usepackage{nicefrac}       
\usepackage{microtype}      
\usepackage{xcolor}     

\usepackage{amsmath}
\usepackage{amsfonts}
\usepackage{amssymb}
\usepackage{cuted}
\usepackage{url}
\usepackage{comment}

\usepackage[linesnumbered,ruled]{algorithm2e}

\newcommand{\ex}{{\bf\sf E}}               

\newcommand{\R}{\mathbb{R}}

\newcommand\Complex{\mathbb C}

\newcommand{\red}[1]{\textcolor{red}{#1}}

   

\newcommand{\bX}{{\bf X}}  


\newcommand{\call}{{\mathcal L}}
\newcommand{\calh}{{\mathcal H}}

\newcommand{\calk}{{\mathcal K}}

\newcommand{\calf}{{\mathcal F}}

\newcommand{\ra}{\rightarrow}           



\def\R{\mathbb{R}}
\def\ex{\mathbb{E}}

\newcommand{\Real}{\mathbb R}

\newcommand{\bbT}{\mathbb T}

\newcommand{\g}{\lambda}                
   
\newcommand{\ta}{\theta}                

\newtheorem{defn}{Definition}

\newtheorem{thm}{Theorem}[section]

\newtheorem{rem}{Remark}[section]
\newtheorem{asm}{Assumption}

\newtheorem{exm}{Example}

\title{\LARGE \bf Mean Field Control by Stochastic Koopman Operator \\ via a Spectral Method}

\author{Yuhan Zhao$^{1}$, Juntao Chen$^{2}$, Yingdong Lu$^{3}$ and Quanyan Zhu$^{1}$
\thanks{$^{1}$ Y. Zhao and Q. Zhu are with New York University. Email: \{yhzhao, qz494\}@nyu.edu}%
\thanks{$^{2}$ J. Chen is with Fordham University. E-mail: jchen504@fordham.edu}%
\thanks{$^{3}$ Y. Lu is with IBM Research. E-mail: yingdong@us.ibm.com}%
}

\begin{document}

\maketitle
\thispagestyle{empty}
\pagestyle{empty}

\begin{abstract}

Mean field control provides a robust framework for coordinating large-scale populations with complex interactions and has wide applications across diverse fields. However, the inherent nonlinearity and the presence of unknown system dynamics pose significant challenges for developing effective analytic or numerical solutions. There is a pressing need for data-driven methodologies to construct accurate models and facilitate efficient planning and control.
To this end, we leverage Koopman operator theory to advance solution methods for mean field control problems. Our approach involves exploring stochastic Koopman operators using spectral analysis techniques. Through Koopman decomposition, we derive a linear model for mean field control problems in a data-driven fashion. Finally, we develop a model predictive control framework to achieve robust control and reduce the computational complexity for mean field control problems, thereby enhancing the efficacy and applicability of mean field control solutions in various domains.

\end{abstract}


\section{Introduction}
\label{sec:intro}

Mean field approximation and mean field control have been widely used in various areas to characterize interactions between numerous agents, such as robotics \cite{elamvazhuthi2019mean,fernando2021online}, transportation \cite{huang2021dynamic}, and biology \cite{bick2020understanding}. A typical example is flow control over traffic networks, where a traffic planner aims to coordinate a large number of drivers and reduce the overall traveling time. The traveling time is proportional to the traffic density on the road. The more drivers who select the same path, the heavier the congestion is, and the more time is required for traveling. Therefore, the traffic planner needs effective control strategies to coordinate drivers that account for collective driver interactions. The optimal coordination strategy can be found by a mean-field control problem.
Similar control strategy designs are also seen in robot swarm coverage \cite{elamvazhuthi2018nonlinear} and herding \cite{elamvazhuthi2020controllability}.

Motivated by these applications, we consider the following control problem (similar to those in~\cite{ELLIOTT20133222}). For each $k=0,1, \ldots, N-1$, we have
\begin{align}
\label{eqn:MFC_dynamics}
x_{k+1} =b (x_k, \ex[x_k], u_k, \ex[u_k]) + \sigma (x_k, \ex[x_k], u_k, \ex[u_k]) w_k
\end{align}
with Lipschitz continuous functions $b: \Real^n\times \Real^n\times \Real^m \times \Real^m \ra \Real^n$ and $\sigma: \Real^n\times \Real^n\times \Real^m \times \Real^m \ra \Real^{n\times n}$, and $w_k$ a martingale difference sequence in $\Real^n$ (see definition and basic properties of martingale and martingale differences in~\cite{karlin2012first}). $x_k\in \Real^n$, $u_k\in \Real^m$, and $w_k\in \Real^n$ represent the state, control, and noise of the system. The cost function of the system is defined as
\begin{align}
J(u, x^0)=&\ex\left[\sum_{k=0}^{N-1} [F (x_k, \ex[x_k])+ C(u_k, \ex[u_k]) ]\right] \nonumber\\& + \ex \left[ G(x_N, \ex[x_N]) \right] + \ex \left[ H( u_N, \ex[u_N]) \right], \label{eqn:MFC_costs}
\end{align}
with $F,G,C,H$ denote the elements of the cost function associated with states (including the average states) and controls (including the average controls), as well as their terminal values at time $N$. 
Our goal is to solve the following optimal control problem with initial state $x^0\in \Real^n$,
\begin{align}
\label{eqn:MFC}
J^*(x^0)=\inf_{u\in U} J(u, x^0),
\end{align}
with $U$ denotes all the stationary random control policies that only depend on the states. More specifically, given a state $x$, for $u\in U$, $u(x_k)$ is a random variable taking values in $\Real^m$ with $\pi(x_k)$ denoting its expectation. \eqref{eqn:MFC} belongs to the family of mean-field control problems, for detailed models and analysis, see, e.g.~\cite{carmona2018probabilistic}.

The key feature of a mean-field control problem for systems with large populations is that the evolution of each individual depends not only on the state of the individual but also on the probabilistic distribution of states for the entire population. This is intended to model the collective behavior resulting from interactions between agents. This feature is summarized by the system dynamics in the form of both state and control variables, as well as their probability laws or statistics as in~\eqref{eqn:MFC}. Such dynamics is a consequence of a limiting process of letting the total population goes to infinity. As an 
active research topic in analyzing systems that consist of a large number of homogeneous agents, mean-field control is highly related to the problem of the mean field game, a similar problem corresponding to a different limiting process. For a detailed analysis of the limiting processes and their connections, see e.g.~\cite{MFGvMFC}. While the mean field game, partly due to its further connections to subjects such as stochastic differential equations, draws considerable attention in the research communities, the studies on mean field control are relatively less thrived. 
A linear-quadratic formulation with a similar approach to the mean-field control has been studied in~\cite{ELLIOTT20133222}, where necessary and sufficient conditions for solvability are obtained. Moreover, the problem is reduced to a matrix dynamic optimization problem, and the optimal control is then subsequently obtained and verified.


In this paper, we propose a new approach for the family of mean field control problems described  in~\eqref{eqn:MFC_dynamics},~\eqref{eqn:MFC_costs}, and~\eqref{eqn:MFC} based on model predictive control and spectral theory for stochastic Koopman operators. 

From a theoretical perspective, the Koopman operators approach provides an effective and efficient scheme for quantifying and approximating the dynamics of the controlled process. It has been applied successfully to various control problems, see e.g.~\cite{korda2018linear,proctor2018generalizing}. Recent developed spectral theory for the Koopman operator, see e.g.~\cite{colbrook2024rigorous}, captures both the discrete and absolution continuous portion spectrum  of the Koopman operator, thus is able to provide convergence guarantees of recovery of the spectral densities and avoids spectral pollution with sufficient data. We extend the spectral analysis to stochastic Koopman operator with control, especially for mean field problems, and demonstrate that this convergence further lead to an asymptotic optimality result for the optimal control problem, similar to those obtained in~\cite{6144708,LU2023111023}. These results then allow us to formulate a model predictive control method for mean-field control problems aimed at optimal nonlinear control of systems with a large population of agents. 

The rest of the paper is organized as follows. In Sec.~\ref{sec:preliminaries}, we will provide necessary preliminaries and background materials; in Sec.~\ref{sec:analysis}, we will adapt the spectral analysis to the random Koopman operator to the mean-field control system including the asymptotic optimality of the approximated control problem in Sec.~\ref{sec:asm_opt}; in Sec.~\ref{sec:MPC}, we introduce our model predictive control formulation for MFC;  and the paper is concluded in Sec.~\ref{sec:conclusions} together with future research.

\section{Preliminaries}
\label{sec:preliminaries}

\subsection{Stochastic Koopman Operator for Mean Field Control Problem}

\subsubsection{Stochastic Koopman Operator without Control}

To use the Koopman operator theory, we first consider a dynamical system without control
\begin{align}
\label{eqn:MFC_dynamics_1}
\left\{\begin{array}{cc}x_{k+1} =&b (x_k, \mu_k) + \sigma (x_k, \mu_k) w_k,
\\ \mu_{k+1}=& b (x_k, \mu_k)\end{array}\right.
\end{align}
with $\ex[x_k]$ is denoted as $\mu_k$. We write the state vector as $y_k=(x_k, \mu_k)\in M \subseteq \Real^{2n}$. Then, \eqref{eqn:MFC_dynamics_1} represents a dynamical system in the form of $y_{k+1}= T (y_k)$, with $T$ being a random function\footnote{For simplicity, we use $M$ to denote the space of the aggregated state vector $y$ in the following. $M$ may have different dimensions when the underlying dynamical system changes. For example, $M \subseteq \R^{2(m+n)}$ for the dynamical system \eqref{eqn:MFC_dynamics_2}.}.

\begin{defn}
The stochastic Koopman operator, a bounded linear operator, $\calk$ is defined on the space of $\{f:M\ra \R :  \ex[f^2]<\infty\}$ as $\calk f(y) = \ex[f(T(y)]$, for any $y\in M$.
\end{defn}

\begin{rem}
Stochastic Koopman operators are studied in~\cite{vcrnjaric2020koopman} and~\cite{Sinha2020}, with dynamical system setup from standard textbook such as~\cite{katok1995introduction} and~\cite{palis2012geometric}.
\end{rem}

\subsubsection{Stochastic Koopman Operator with Control}

Koopman Operator approach has been generalized to controlled dynamical systems, see e.g.~\cite{korda2018linear,proctor2018generalizing}. Here, we adopt a similar approach for our stochastic Koopman operator. More specifically, we denote $\ex[u_k]$ as $\rho_k$ and let $v_k$ be a martingale difference sequence representing the randomization of the control.
Then we arrive at the following from \eqref{eqn:MFC_dynamics}:
\begin{align}
\label{eqn:MFC_dynamics_2}
\left\{\begin{array}{cc}
    x_{k+1} =&b (x_k, \mu_k, u_k, \rho_k) + \sigma (x_k, \mu_k, u_k, \rho_k) w_k,\\ 
    \mu_{k+1}=& b (x_k, \mu_k, u_k, \rho_k), \\ 
    u_{k+1}=& \pi(x_{k+1}) + v_{k+1},\\
    \rho_{k+1}=& \pi(x_{k+1}). \end{array}
\right.
\end{align}
The dynamical system in~\eqref{eqn:MFC_dynamics_2} defines the operator $T$, i.e. $(x_{k+1}, \mu_{k+1}, u_{k+1}, \rho_{k+1}) = T((x_{k}, \mu_{k}, u_{k}, \rho_k))$. Therefore, on the space $\{f:\Real^n \times \Real^n \times \Real^m\times \Real^m\ra \Real: \ex[f^2]<\infty\}$, the stochastic Koopman operator is defined as $\calk f(y) =\ex f(T(y))$, where $y=(x_{k+1}, \mu_{k+1}, u_{k+1}, \rho_{k+1}) \in M \subseteq \R^{2(m+n)}$.
\begin{rem}
An alternative and more abstract treatment for the Koopman operator for control that requires infinite sequence and shift operator is presented in~\cite{korda2018linear}. Our approach will be aligned with a more straightforward approach also discussed in both~\cite{korda2018linear} and~\cite{proctor2018generalizing}. Basically, the control is assumed to be stationary random policies. Hence, they will be generated randomly with the previous control as parameters. Thus, both the position and mean of the current process are present in consistent with mean-field control. 
\end{rem}

\subsection{Spectral Theorem and Spectral Measure}

As an operator between Hilbert spaces, the Koopman operator $\calk$ can be diagonized with the help of the classic results of the Spectral Theorem for the operators. 

\subsubsection{Spectral Theorem}
The spectral theorem is a generalization of diagonalization of matrices, and detailed discussion can be found in traditional functional analysis textbooks, such as~\cite{reed1981functional},~\cite{hille1996functional} and~\cite{yosida2012functional}, and a more recent and excellent book~\cite{simon2015operator}. In the case of matrices, a normal matrix (satisfying $AA^*=A^*A$) possesses the following decomposition, $A=\sum_{i=1}^d\g_i E_i$ with $E_i$ being the projection to the eigenspace corresponding to the eigenvalue $\g_i$, $i=1,2,\ldots d$. Similar results can be obtained for normal linear operators on Hilbert spaces. To see that, let us start with the concept of \emph{resolution of identity}.

\begin{defn}[Resolution of Identity]
A \emph{resolution of Identity} is a function $t\mapsto E_t$ from $\Real$ to the orthogonal projections on $H$ satisfying
\begin{itemize}
\item
monotonicity: $E_t\le E_s$ for any $t<s$,
\item
regularity: $\lim_{t\ra -\infty}E_t=0, \quad \lim_{t\ra \infty}E_t=Id$,
\item
continuity: $\lim_{t\downarrow s} E_t=E_s$.
\end{itemize}
\end{defn}
Then, the following version of the spectral theorem applies to the self-adjoint operator ($T=T^*$), which is, of course, normal.  
\begin{thm}[Spectral Theorem~\cite{simon2015operator}]
For any bounded self-adjoint operator $A$ on a Hilbert space $\calh$, there exists a resolution of identity, $E_t$, supported on $[-\|A\|, \|A\|]$ with $\|A\|$ being the operator norm of $A$. Furthermore, the operator $A$ has a spectral integration representation  $A=\int t dE_t$. 
\end{thm}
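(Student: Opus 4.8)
The plan is to build a functional calculus for $A$, represent it by scalar measures, and then define $E_t$ by applying that calculus to the indicator $\chi_{(-\infty,t]}$. First I would set up the \emph{polynomial functional calculus}: for a polynomial $p$ put $p(A)=\sum_j c_j A^j$. Because $A=A^*$, the spectrum $\sigma(A)$ is real and contained in $[-\norm{A},\norm{A}]$, and the $C^*$-identity $\norm{A^2}=\norm{A}^2$ together with Gelfand's formula $r(A)=\lim_n\norm{A^n}^{1/n}$ gives $\norm{A}=r(A)$; combined with the spectral mapping theorem $\sigma(p(A))=p(\sigma(A))$ this yields the isometry $\norm{p(A)}=\sup_{\lambda\in\sigma(A)}\abs{p(\lambda)}$. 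Since polynomials are dense in $C([-\norm{A},\norm{A}])$ by the Weierstrass theorem, the isometric map $p\mapsto p(A)$ extends uniquely to a positive unital $*$-homomorphism $\Phi:C([-\norm{A},\norm{A}])\to B(\calh)$ with $\Phi(p)=p(A)$; in particular $\Phi(\mathrm{id})=A$ for the function $\mathrm{id}(t)=t$.

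Next, for each $x\in\calh$ the functional $f\mapsto\scpr{\Phi(f)x}{x}$ is positive and bounded by $\norm{x}^2$, so the Riesz--Markov--Kakutani theorem supplies a finite positive Borel measure $\mu_x$ on $[-\norm{A},\norm{A}]$ with $\scpr{\Phi(f)x}{x}=\int f\,d\mu_x$, and polarization gives complex measures $\mu_{x,y}$ representing $\scpr{\Phi(f)x}{y}$. I would then extend $\Phi$ to bounded Borel functions $g$ by declaring $\scpr{\Phi(g)x}{y}=\int g\,d\mu_{x,y}$: the right-hand side is a bounded sesquilinear form in $(x,y)$ and hence defines $\Phi(g)\in B(\calh)$, and one verifies via dominated/monotone convergence together with the density of $C$ that this extension is still a $*$-homomorphism. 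Setting $E_t:=\Phi(\chi_{(-\infty,t]})$, the pointwise identities $\chi_{(-\infty,t]}=\overline{\chi_{(-\infty,t]}}=\chi_{(-\infty,t]}^2$ transfer through $\Phi$ to give $E_t=E_t^*=E_t^2$, i.e.\ each $E_t$ is an orthogonal projection.

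It remains to check the axioms and the integral formula. Monotonicity is immediate from positivity of $\Phi$ and $\chi_{(-\infty,s]}\le\chi_{(-\infty,t]}$ for $s<t$; the regularity limits $E_t\to0$ as $t\to-\infty$ and $E_t\to Id$ as $t\to\infty$, and the right-continuity $E_t\to E_s$ as $t\downarrow s$, hold in the strong operator topology and follow from continuity of the finite measures $\mu_x$ along the relevant monotone families of half-lines, using $\scpr{E_tx}{x}=\mu_x((-\infty,t])$ and $\mu_x([-\norm{A},\norm{A}])=\norm{x}^2$. Finally, for any partition $-\norm{A}=t_0<\cdots<t_N=\norm{A}$ the Riemann--Stieltjes sum $\sum_i t_i(E_{t_i}-E_{t_{i-1}})$ is $\Phi$ applied to a step function that approximates $\mathrm{id}$ uniformly, so these sums converge in operator norm; writing $B$ for their limit $\int t\,dE_t$, we get $\scpr{Bx}{y}=\int t\,d\mu_{x,y}=\scpr{\Phi(\mathrm{id})x}{y}=\scpr{Ax}{y}$, so $A=\int t\,dE_t$.

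The main obstacle is the step from the \emph{continuous} functional calculus to the \emph{Borel} one: showing that the extended $\Phi$ remains multiplicative, which is precisely what makes $E_t$ a genuine projection rather than merely a positive contraction. The delicate point is interchanging limits with the scalar integrals uniformly in $x$ and $y$, handled by dominated convergence on a dense set of continuous $f$ followed by a density argument in $g$. An alternative route that avoids the Borel extension is the multiplication-operator model: decompose $\calh$ into an orthogonal sum of $A$-cyclic subspaces via Zorn's lemma, identify $A$ on each summand with multiplication by $t$ on some $L^2(\mu)$ where $E_t$ is visibly multiplication by $\chi_{(-\infty,t]}$, and reassemble; the cost there is the cyclic decomposition and the bookkeeping of the direct sum instead of a convergence argument.
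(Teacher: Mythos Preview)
Your proof plan is correct and is essentially the standard textbook route (continuous functional calculus $\to$ Riesz--Markov scalar measures $\to$ Borel extension $\to$ $E_t:=\Phi(\chi_{(-\infty,t]})$), with the multiplicativity of the Borel extension correctly flagged as the one nontrivial step. However, there is nothing in the paper to compare it against: the paper does \emph{not} prove this theorem at all. It is quoted as a classical result with a citation to Simon's \emph{Operator Theory} and is immediately followed by a remark about functional calculus representations such as $A^n=\int t^n\,dE_t$; the theorem serves purely as background for the spectral-measure machinery used later. So your proposal is sound, but it is a proof of a statement the paper deliberately imports rather than establishes.
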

\begin{rem}
In addition to diagonalization, spectral theorem also provides representations to derived operators based on $A$, for example, $A^n = \int t^n dE_t$ and $\exp[A]= \int e^t dE_t$. Some of these representations are instrumental in subsequent development.
\end{rem}

\subsubsection{Spectral Measure}

In one sentence, the spectral measure is a projection-valued measure, denoted by $\xi$, on the spectrum of the operator that corresponds to the integration in the Spectral Theorem. Even though the above version of the Spectral Theorem is stated for the self-adjoint operator whose spectrum is real, all the results can be extended to the normal operator on a complex Hilbert space whose spectrum is a subset in the complex plane. 

\subsection{Lebesgue Decomposition of Measures}

Now, for a real-valued measure $\nu$ on $M$, Lebesgue decomposition, see, e.g.~\cite{halmos2013measure,rudin1987real,bogachev2007measure}, produces the following decomposition
\begin{align}
d\nu(y) =& \underbrace{\delta(y-\ta) dy}_{\text{discrete component}} + \underbrace{\psi(y)dy}_{\text{absolutely continuous component}}\nonumber \\ &+ \underbrace{d\nu^{SC}(y)}_{\text{singular continuous component}}.
\label{eqn:Leb_dec}
\end{align}
The discrete component ($\delta$ function, point measures) corresponds to the isolated eigenvalues. The absolute continuous component represents the part of the measures that has absolutely continuous density function with respect to the Lebesgue measure.
The Singular continuous component represents the part of the measures that is continuous with respect to the Lebesgue measure but not absolutely continuous, most likely driven by some Cantor functions.



\section{Spectral Analysis of the stochastic Koopman Operator}
\label{sec:analysis}

In this section, we will provide details on the spectral analysis of our stochastic Koopman operator for controlled system. The analysis leads to Algorithms~\ref{alg:cap} and~\ref{alg:2}, as well as their convergence in   Theorem~\ref{thm:DS_conv}. As a consequence, Theorem~\ref{thm:convergence_of_the_DS} demonstrates that this can further lead to the convergence of the controlled systems. 

\subsection{Measure-Preserving Dynamical Systems}

For ease of exposition, we will restrict our attention to measure-preserving dynamical systems. For more general dynamical systems, our approach can be generated through Galerkin approximation methods, see, e.g.~\cite{colbrook2024rigorous}. In a measure-preserving dynamical system defined as iterations of map $T$ as in\eqref{eqn:MFC_dynamics}, the pushforward by the map $T$ does not change the measure $\mu$\footnote{With a little abuse of notation, here we only use $\mu$ to represent a measure.}, i.e. for any Borel measurable subset $A\subseteq \bX$, $\mu(T^{-1}(A)) =\mu(A)$. Equivalently, this means that the Koopman operator $\calk$ is an isometry, i.e. $\|\calk g\|=\|g\|$ for and $g\in D(\calk)$. This, in turn, means that the spectrum, i.e. $\sigma(\calk):=\{ \g \in \Complex: \g I - \calk \hbox{ is not invertible}\}$, is contained in the unit circle $\bbT$ on the complex plane. In the case of a random dynamical system, we assume that the measure-preserving property is satisfied almost surely. Therefore, the spectral measure is a projection-valued measure on the Borel algebra on the unit circle. In summary, we have, 
\begin{asm}
\label{asm:measure_preserving}
$T$ is almost surely measure-preserving.
\end{asm}

We note that the transformation $[-\pi, \pi]/\{-\pi, \pi\}\ra \bbT, \ta \mapsto e^{i\ta}$ establishes an isometry between the unit circle and $[-\pi, \pi]/\{-\pi, \pi\}$, which is also denoted as $[-\pi, \pi]_{per}$. For any functions $f,g \in L^2(M)$ and Borel set $U \subseteq[-\pi, \pi]_{per}$, we define $\nu_{f,g}(U):=\langle f, \xi(U) g\rangle$ using the inner product of $L^2(M)$. By definition, $\xi(U)$ is a projection operator. Hence, $\xi(U)g$ is in the Hilbert space $L^2(M)$. Thus, $\nu_{f,g}(\theta)$ is a real-valued measure on $[-\pi, \pi]_{per}$. 
Since there can be atoms in $[-\pi, \pi]_{per}$, the integration using $\nu_{f,g}$ on $[-\pi, \pi]_{per}$ is generally written as $\int_{[-\pi, \pi]_{per}} d\nu_{f,g}(\theta)$.
Note that a special case is to set $f=g$ (e.g., see~\cite{colbrook2024rigorous}), where $\nu_{g,g}(U) = \langle g, \xi(U) g \rangle$ becomes a scalar measure and represents the spectral measure of $\calk$ with respect to the function $g$. 

\begin{asm}
\label{asm:no_singular_continuity}
The Lebesgue decomposition of $\nu_{f,g}$ does not contain a singular continuous component.
\end{asm}

Under the Assumption \ref{asm:no_singular_continuity} (which is the case for most of the measures encountered in practical applications), we apply the Lebsegue decomposition~\eqref{eqn:Leb_dec} and obtain 
\begin{align}
\label{eqn:Leb_dec_nu}
    d\nu_{f,g}(y) = \sum_{\g=e^{i\theta} \in \sigma_p(\calk)} \langle P_\g g, f\rangle \delta(y-\ta) dy +\psi_{f,g}(y) dy.
\end{align}
The Lebesgue decomposition \eqref{eqn:Leb_dec_nu} tells the discrete component of the spectrum $\sigma_p(\calk)$, which contains the eigenvalues $\lambda_1, \lambda_2, \dots, \in \sigma_p(\calk)$ and the corresponding eigenfunctions $\phi_\lambda$, and the absolutely continuous component, which is the density $\psi_{f,g}(y)$.
Besides, Assumption~\ref{asm:no_singular_continuity} also implies that the integral on $[-\pi, \pi]_{per}$ can be interpreted as Riemann integral.
Therefore, any function $g\in L^2(M)$ can be written as 
\begin{align*}
g(y) = \sum_{\g\in \sigma_p(\calk)} c^g_\g\phi_\g(y) + \int_{-\pi}^\pi \phi_{\ta, g}(y) d\ta.
\end{align*}
Here, 
$c^g_\lambda$ are expansion coefficients; $\phi_{\ta, g}$ is absolutely continuous components of the function-valued measure $d\xi(\ta) g$ and 
$\psi_{f,g} = \langle \phi_{\theta,g}, f \rangle$. 
This leads to the following representations of the system dynamics with $y$ being the state variable:
\begin{align}
\nonumber
\ex[g(y_n)] =&\calk^ng(y_0)
\\= & \sum c_\g \g^n \phi_\g(y_0) + \int_{-\pi}^{\pi} e^{in\ta} \phi_{\ta, g}(y_0)  d\ta,\label{eqn:spectrum_exp}
\end{align}
which plays a crucial role in prediction and control.
Therefore, it is crucial that we can recover both $\phi_\g$ and $\phi_{\ta, g}$ from data. 

\subsection{Data-Driven Spectral Measure Characterization of the Koopman Operator}

\subsubsection{Characterizing A Real-valued Measure on An Interval} 

$\nu_{f,g}(\ta)$, as a real-valued measure on $[-\pi, \pi]_{per}$, can be completely described by its characteristic function obtained by the Fourier transform
\begin{align*}
\calf \nu_{f,g} (\omega) = \frac{1}{2\pi} \int_{-\pi}^\pi e^{-i \omega\ta} d\nu_{f,g}(\ta).
\end{align*}
Fourier series, the discrete counterpart, see e.g.~\cite{edwards1979fourier}, connects to the dynamical system more directly,  
\begin{align}
{\hat \nu}_{f,g}(n) :=&\frac{1}{2\pi} \int_{-\pi}^\pi e^{-in\ta}d\nu_{f,g}(\ta)=\frac{1}{2\pi}\int_\bbT \g^n d\nu_{f,g}(\g)\nonumber \\=&\frac{1}{2\pi}\langle f, \calk^n g\rangle. \label{eqn:Fourier_coefficient}
\end{align}
The last quantity is the inner product of the observation function $f$ and the $n$-th observation of the dynamical system using the observation function $g$. The implication of~\eqref{eqn:Fourier_coefficient} is that an estimation of $\langle f, \calk^n g\rangle$, which is straightforward from data, produces Fourier coefficients for the desired measure.

\subsubsection{Recovering the Spectral Measure} 

The problem of recovering the spectral measure of the Koopman operator amounts to obtaining all or some of ${\hat \nu}_{f,g}(n)$ through calculations or approximations. The data-driven approach aims at exactly that through the sampling method. More specifically, for any desired accuracy, there exists $N>0$, such that the following expression produces a desirable  approximation  of ${\hat \nu}_{f,g}(n)$,
\begin{align*}
\nu_{f,g,N}(\ta) =&\sum_{n=-N}^N \phi\left(\frac{n}{N}\right) {\hat \nu}_{f,g}(n) e^{in\ta} \\=& \frac{1}{2\pi}\sum_{n=-N}^{-1} \phi\left(\frac{n}{N}\right) \overline{\langle f, \calk^{-n} g\rangle}e^{in\ta}\\ &+ \frac{1}{2\pi}\sum_{n=0}^N \phi\left(\frac{n}{N}\right) \langle g, \calk^{-n} f\rangle e^{in\ta},
\end{align*}
with $\phi(\cdot)$ being the standard filter function. In a nutshell, the $\nu_{f,g,N}(\ta)$ provides a recovering of the value $\ex[f(y_k)]$ using $N$ trajectory points observed by $f$ and $g$.
We summarize the spectral measure recovery algorithm in Alg.~\ref{alg:cap}.

\begin{algorithm}
    \SetKwInOut{Input}{Input}
    \SetKwInOut{Output}{Output}

    \Input{Filter function $\phi$ and observation functions $f,g\in L^2(M)$}
    Sample consecutive observation data $g(y_n) := \calk^n g(y_0)$, $n = 0, \dots, N$ \;
    Compute Fourier coefficients $a_n= \frac{1}{2\pi}\langle f, \calk^n g\rangle$ for $0\le n\le N$\;
    set $a_{-n}={\bar a_n}$\;
    \Output{$\nu_{f,g,N}(\ta)=\sum_{n=-N}^N \phi(n/N)a_ne^{in\ta}$}
    
    \caption{Recovering spectral measure $\nu_{f,g}(\ta)$ }\label{alg:cap}
\end{algorithm}

\subsubsection{Recovering the Eigenvalues and Eigenfunctions} 

A key consequence of recovering the spectral measure $\nu_{f,g}$ through data is the recovering of eigenvalues $\g\in \sigma_p(\calk)$ and their associated eigenfunctions. More specifically, Theorem 3.3 in~\cite{colbrook2024rigorous} demonstrates that under mild assumptions that all the operators easily satisfy, the eigenvalues of $\calk$, as the discrete portion of the spectral measure, can be recovered fully through the Fourier series method. Without causing confusion, we denote the recovered eigenvalues by $\sigma_p(\calk)$.
In practice, a truncated Fourier series will be used to approximate these eigenvalues. We can also set thresholds (e.g., take the first largest $N$ eigenvalues) to ensure $|\sigma_p(\calk)|$ is finite. For each eigenvalue $\g$, it is demonstrated in~\cite{Mezic2005} that the associated eigenfunction $\phi_\g$ takes the form of
\begin{align} \label{eqn:eigenfn}
\phi_\g (y) =\lim_{n\ra\infty} \frac{1}{n}\sum_{j=0}^{n-1}e^{i2\pi j\g} \ex[h(T^j(y))], \quad \forall y\in M,
\end{align}
where $h$ is some observation function in the Hilbert space. It can be again approximated through truncation.
A practical approach to estimate the coefficients $c^g_\lambda$ is to disregard the continuous part of the spectral measure and use the least square approximation. This is because the continuous part is generally small for common dynamical systems (except for chaotic systems).
More specifically, given an observation function $g$, we sample observation trajectory $\{ g(y_i)\}_{i=1}^N, \{ \phi_\lambda(y_i) \}_{i=1}^N$ for $\lambda \in \sigma_p(\calk)$. Then, we solve the following least square approximation problem to obtain $c^g_\lambda$:
\begin{equation}
\label{eqn:leastsquare}
    \min_{ \{c^g_\lambda \}_{\lambda \in \sigma_p(\calk)} } \quad \sum_{i=1}^N \big\Vert g(y_i) - \sum_{\lambda \in \sigma_p(\calk)} c^g_\lambda \phi_\lambda(y_i) \big\Vert^2_2.
\end{equation}
We summarize the procedure in Alg.~\ref{alg:2}.

\begin{algorithm}
\SetKwInOut{Input}{Input}
\SetKwInOut{Output}{Output}
    
\Input{Observation function $g$}
Recover eigenvalues $\sigma_p(\calk)$ and the corresponding eigenvectors $\phi_\lambda$, $\lambda \in \sigma_p(\calk)$ \;
Sample $N$ observations $\{ g(y_i)\}_{i=1}^N, \{ \phi_\lambda(y_i) \}_{i=1}^N$ for $\lambda \in \sigma_p(\calk)$ \;
Solve least square approximation problem \eqref{eqn:leastsquare}\;
\Output{Expanding coefficients $c^g_\lambda$}
\caption{Expansion Coefficients Estimation}
\label{alg:2}
\end{algorithm}

\subsubsection{Convergence Theorem}

It is well known that $\nu_{f,g,N}(\ta)$ converge weakly to $\nu_{f,g}(\ta)$. Moreover, the distance between them is in the order of $\log (N)/N$, as demonstrated in~\cite{wanner2022robust} through a straightforward estimation. Moreover, 
(Lebegue-)almost everywhere, pointwise convergence of the density can also be established under proper conditions such as the Dini criterion, see, e.g., Theorem 3.1 in~\cite{wanner2022robust}. In conclusion, we have the following theorem.
\begin{thm}
\label{thm:DS_conv}
Under the assumption that sample average recovers ${\hat \nu}_{f,g}(n)$
for each $n=1,2,\ldots, N$ with sufficient data, $\nu_{f,g,N}(\ta)$, the output of Algorithm~\ref{alg:cap},  recovers the spectral measure of $\calk$ as $N\ra \infty$.
\end{thm}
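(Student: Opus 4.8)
The plan is to establish the theorem in two layers: first the weak convergence of the approximating measures $\nu_{f,g,N}$ to the true spectral measure $\nu_{f,g}$, and then the recovery of the discrete and absolutely continuous components separately, which together constitute ``recovering the spectral measure'' in the sense of the Lebesgue decomposition~\eqref{eqn:Leb_dec_nu}. First I would set up the identification of the relevant Fourier data: by~\eqref{eqn:Fourier_coefficient} we have ${\hat \nu}_{f,g}(n) = \frac{1}{2\pi}\langle f, \calk^n g\rangle$, and under the standing hypothesis the sample averages converge to these quantities for each fixed $n$. Since $\calk$ is an isometry (Assumption~\ref{asm:measure_preserving}), the measure $\nu_{f,g}$ is supported on $\bbT \cong [-\pi,\pi]_{per}$, so $\nu_{f,g}$ is a finite (complex) measure on a compact set and is therefore uniquely determined by the full sequence $\{{\hat \nu}_{f,g}(n)\}_{n\in\bZ}$ — this is the classical uniqueness theorem for Fourier coefficients of measures on the circle.

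Next I would carry out the convergence argument proper. The construction $\nu_{f,g,N}(\ta) = \sum_{n=-N}^N \phi(n/N)\,{\hat\nu}_{f,g}(n)\,e^{in\ta}$ is a Fourier-series partial sum modulated by the filter $\phi$; I would invoke the Fej\'er/Jackson-type smoothing property of a well-chosen filter $\phi$ (continuous, compactly supported on $[-1,1]$, equal to $1$ near $0$) to conclude that $\nu_{f,g,N}\,d\ta \weakto \nu_{f,g}$ as $N\to\infty$, with the quantitative $O(\log N / N)$ rate cited from~\cite{wanner2022robust}. Then, to separate components, I would use that atoms of $\nu_{f,g}$ sit exactly at $\ta$ with $e^{i\ta}\in\sigma_p(\calk)$, and that Theorem~3.3 of~\cite{colbrook2024rigorous} recovers these atoms (locations and weights $\langle P_\g g, f\rangle$) from the same Fourier data; the absolutely continuous density $\psi_{f,g}$ is recovered pointwise a.e.\ under the Dini criterion via Theorem~3.1 of~\cite{wanner2022robust}, using that Assumption~\ref{asm:no_singular_continuity} rules out the singular continuous piece. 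Assembling these two pieces gives convergence of the full Lebesgue decomposition~\eqref{eqn:Leb_dec_nu}, which is the assertion of the theorem.

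The main obstacle I expect is the interchange of the two limiting procedures: the sample-average estimation of ${\hat\nu}_{f,g}(n)$ (a statistical/ergodic limit, as the number of trajectory points grows) and the $N\to\infty$ truncation limit of the Fourier series. The hypothesis of the theorem papers over this by assuming ``sample average recovers ${\hat\nu}_{f,g}(n)$ for each $n=1,\dots,N$ with sufficient data,'' so strictly the proof reduces to the deterministic statement: if the first $2N+1$ Fourier coefficients are known (approximately), then $\nu_{f,g,N}$ approximates $\nu_{f,g}$ to within a controlled error that vanishes as $N\to\infty$. I would therefore be careful to state precisely that the convergence is in the weak-$*$ topology for the global measure, and pointwise a.e.\ (resp.\ exact in finitely many steps up to thresholding) for the absolutely continuous (resp.\ discrete) components — a uniform or total-variation statement is not available and is not claimed. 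A secondary technical point worth isolating is that $\nu_{f,g}$ for $f\ne g$ is only a signed/complex measure rather than a positive one, so I would either polarize ($\nu_{f,g} = \frac14\sum_{k=0}^3 i^k \nu_{f+i^k g,\,f+i^k g}$) to reduce to the scalar spectral measures $\nu_{h,h}$ treated in~\cite{colbrook2024rigorous}, or check that the cited estimates hold verbatim for complex measures; the polarization route is cleaner and I would take it.
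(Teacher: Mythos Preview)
Your proposal is correct and follows the same approach as the paper: the paper does not give a formal proof but presents the theorem as a summary of the preceding paragraph, which invokes exactly the weak convergence (with the $O(\log N/N)$ rate from~\cite{wanner2022robust}) and the Lebesgue-a.e.\ pointwise convergence of the density under the Dini criterion (Theorem~3.1 in~\cite{wanner2022robust}) that you cite. Your treatment is in fact more detailed than the paper's --- the polarization to reduce to scalar spectral measures and the explicit discussion of the interchange of the sampling and truncation limits do not appear there --- but the underlying strategy is identical.
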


\subsection{Asymptotic Optimality of the Data-Driven Control Policy}
\label{sec:asm_opt}

With Theorem~\ref{thm:DS_conv} (weakly or pointwisely) ensuring that the data-driven process recovers the key characteristics of the original dynamical system for any fixed control policy, in this section, we demonstrate that this convergence can be carried over to the control problems. Intuitively speaking, we can solve the truncated optimal control system, and the resulting control will produce a performance that is asymptotically optimal.  More specifically, we have the following theorem.
\begin{thm}
\label{thm:convergence_of_the_DS}
Under the same assumptions for Theorem~\ref{thm:DS_conv}, the optimal control policy for the data-driven controlled problem is asymptotically optimal for the original control problem. Suppose that $J^*$ will be the optimal objective for~\eqref{eqn:MFC}, and $J^*_N$ is the optimal control objective obtained for the $N$-th data-driven system, then we have, for any $\epsilon>0$, 
$\lim\inf_{N\ra \infty} J^*_N +2\epsilon \ge J^*\ge \lim\sup_{N\ra \infty} J^*_N -2\epsilon $, hence, $\lim_{N\ra \infty} J^*_N =J^*$
\end{thm}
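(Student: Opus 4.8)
The plan is to show that the data-driven system's optimal value $J^*_N$ brackets the true optimum $J^*$ up to $2\epsilon$, by transferring the convergence of the spectral representation (Theorem~\ref{thm:DS_conv}) from the uncontrolled dynamics to the controlled cost functional. First I would fix an arbitrary admissible stationary policy $u\in U$; under Assumption~\ref{asm:measure_preserving} the closed-loop map $T_u$ from~\eqref{eqn:MFC_dynamics_2} is (a.s.) measure-preserving, so the spectral expansion~\eqref{eqn:spectrum_exp} applies to each observable appearing in the cost. Writing the running and terminal cost ingredients $F,C,G,H$ (composed with the state/mean coordinates) as observables $g\in L^2(M)$, the expectations $\ex[g(y_k)]$ in~\eqref{eqn:MFC_costs} admit the representation $\sum_\g c_\g\g^k\phi_\g(y_0)+\int_{-\pi}^\pi e^{ik\ta}\phi_{\ta,g}(y_0)\,d\ta$, and the truncated ($N$-th) data-driven model replaces this by the finite-rank spectral approximation recovered by Algorithms~\ref{alg:cap} and~\ref{alg:2}. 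By Theorem~\ref{thm:DS_conv}, for $N$ large the approximate expectation is within any prescribed tolerance of the true one, uniformly over the finitely many time steps $k=0,\dots,N$ (the horizon being fixed); summing over $k$ and using $|J_N(u,x^0)-J(u,x^0)|\le\epsilon$ for $N$ large gives closeness of the cost functionals for each fixed $u$.

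Next I would upgrade this to the optimal values by the standard two-sided sandwich. For the upper bound: let $u^*_N$ be optimal for the $N$-th data-driven problem; then $J^*_N=J_N(u^*_N,x^0)\ge J(u^*_N,x^0)-\epsilon\ge J^*-\epsilon$, so $\liminf_N J^*_N\ge J^*-\epsilon$, and since $\epsilon$ is arbitrary (or by carrying the $2\epsilon$ as in the statement) $\liminf_N J^*_N\ge J^*$. For the lower bound: pick an $\epsilon$-optimal policy $u_\epsilon$ for the original problem, $J(u_\epsilon,x^0)\le J^*+\epsilon$; feeding the \emph{same} policy into the data-driven model, $J^*_N\le J_N(u_\epsilon,x^0)\le J(u_\epsilon,x^0)+\epsilon\le J^*+2\epsilon$ for $N$ large, so $\limsup_N J^*_N\le J^*+2\epsilon$. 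Combining the two inequalities yields $\liminf_N J^*_N+2\epsilon\ge J^*\ge\limsup_N J^*_N-2\epsilon$, and letting $\epsilon\downarrow 0$ gives $\lim_N J^*_N=J^*$.

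The main obstacle is making the cost-approximation step genuinely \emph{uniform in the control policy} rather than merely pointwise in $u$: the lower-bound argument only needs one fixed $u_\epsilon$, but one must ensure the data-driven model is built so that evaluating it along an arbitrary admissible policy is legitimate, and that the error $|J_N(u,x^0)-J(u,x^0)|$ can be controlled by a bound independent of $u$ (or at least for the specific policies used in the sandwich). This requires either (i) a compactness/equicontinuity argument on $U$ together with Lipschitz dependence of the closed-loop Koopman operator on the policy, so that the weak (or Dini-pointwise) convergence in Theorem~\ref{thm:DS_conv} is locally uniform, or (ii) restricting attention to policies representable within the finite spectral dictionary, in which case the approximation is exact in the limit by construction. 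A secondary technical point is controlling the truncation of the continuous spectral component and of the eigenfunction formula~\eqref{eqn:eigenfn} in the sup norm over the relevant region of $M$, which is where Assumption~\ref{asm:no_singular_continuity} and the $\log(N)/N$ rate from~\cite{wanner2022robust} enter; given those, the summation over the fixed finite horizon $N$ is routine.
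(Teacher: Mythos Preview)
Your proposal is correct and follows essentially the same two-sided sandwich argument as the paper: pick near-optimal controls for the data-driven and the original problems, transfer them across systems, and use Theorem~\ref{thm:DS_conv} plus continuity of the cost to bound $|J_N(u)-J(u)|<\epsilon$. The only notable difference is that you explicitly flag the need for the approximation error to be uniform in the policy (since the control $u_N$ depends on $N$), a point the paper's proof invokes without discussion; your sketch of how to handle it via compactness/Lipschitz dependence or dictionary restriction goes beyond what the paper provides.
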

\begin{proof}
First, let us show that $\lim\inf_{N\ra \infty} J^*_N \ge J^*$. For each $N$, there will be a control $u_N$ such that $J_N \le J^*_N+\epsilon$. Applying $u_N$ to the limiting system will definitely dominate $J^*$. Meanwhile, for $N$ large enough $|J(u_N)-J_N(u_N)|<\epsilon$ by continuity of the cost functions and Theorem~\ref{thm:DS_conv}.  Hence, $\lim\inf_{N\ra \infty} J^*_N \ge \lim J_N(u_N) -\epsilon\ge \lim J(u_N) -2\epsilon\ge J^* -2\epsilon$. Meanwhile, for any given $\epsilon>0$, there exists a $u$ such that $J(u) \le J^*+\epsilon$ for the limiting system. Apply $u$ to any system $N$, then from Theorem~\ref{thm:DS_conv}, we know that for $N$ large enough $|J_N(u) -J(u)|<\epsilon$. Meanwhile, $J^*_N\le J(u)$ by definition. Put them together, we have, $\lim\sup_{N\ra \infty} J^*_N\le J^*+2\epsilon$. 
\end{proof}

\section{Model Predictive Control for Mean Field Control Problems}
\label{sec:MPC}

Our analysis of the stochastic Koopman operator presents methods for effective predictions for nonlinear stochastic dynamical systems. These predictive methods, especially the ones for controlled systems, enable the application of sophisticated control techniques. Here, we give details on one such approach of \emph{model predictive control}. The essence of model predictive control is to formulate and solve an optimization problem over a predictive horizon, where the objective captures the control function, and constraints reflect the dynamics of the state and control variables. The Koopman operator approach has the benefit of being essentially a linear system, hence leading to a computationally more tractable treatment. 

First, we establish an abstract formulation of a model predictive control to the Mean Field Control problem via the Koopman operator approach, assuming that the decomposition~\eqref{eqn:spectrum_exp} is available. Next, we will set up a fully data-driven approach to the mean-field control problem with the help of 
linear approximation methods such as those can be found in e.g.~\cite{korda2018linear,EDMD2015}.

In our mean-field control setting, at each time period $k$, given the state of the system being $x_k, \mu_k$, and the prediction horizon $T>0$, the following optimization problem for variables $y_\ell=(x_\ell, \mu_\ell, u_\ell, \rho_\ell)$, $\ell = k,\dots, k+T$, is considered:
\begin{align*}
    &\min_{\{y_\ell\}_{k}^{k+T}} \quad \ex\left[ \sum_{t=0}^{T-1} [F(x_{k+t}, \mu_{k+t}) + C(u_{k+t}, \rho_{k+t}) ] \right] \\
    & \hspace{4em} +\ex[G(x_{k+T},\mu_{k+T})]+\ex[H(u_{k+T}, \rho_{k+T})]\\
    \text{s.t.} & \left\{\begin{array}{cc}
        x_{\ell+1} = & b (x_\ell, \mu_\ell, u_\ell, \rho_\ell) + \sigma (x_\ell, \mu_\ell, u_\ell, \rho_\ell )w_\ell,\\ 
        \mu_{\ell+1} = & b (x_\ell, \mu_\ell), \\ 
        u_{\ell+1} =&\pi(x_{\ell+1}) +v_{\ell+1},\\ 
        \rho_{\ell+1} = &\pi(x_{\ell+1}) \end{array}\right.\\ 
    &  y_\ell = (x_\ell,\mu_\ell, u_\ell,\rho_\ell), \quad \hbox{O.C.}
\end{align*}
Here, O.C. denotes \emph{other constraints} that refer to additional constraints that accommodate application-specific requirements.  From the decomposition discussed in Sec.~\ref{sec:analysis}, we have the following representation of the dynamics from the stochastic Koopman operators method:
\begin{align*}
&\ex[F(x_{k+t}, \mu_{k+t})]\\=&\sum_{\lambda \in \sigma_p(\calk)} c^F_\g \g^t \phi_\g(y_k) + \int_{-\pi}^{\pi} e^{it\ta} \phi_{F}(\ta)(y_k) d\ta,
\end{align*}
\begin{align*}
&\ex[G(x_{k+T}, \mu_{k+T})]\\=&\sum_{\lambda \in \sigma_p(\calk)} c^G_\g \g^T \phi_\g(y_k) + \int_{-\pi}^{\pi} e^{iT\ta} \phi_{G}(\ta)(x_k) d\ta,
\end{align*}
\begin{align*}
&\ex[C(u_{k+t}, \rho_{k+t})]\\=&\sum_{\lambda \in \sigma_p(\calk)} c^C_\g \g^t \phi_\g(y_k) + \int_{-\pi}^{\pi} e^{it\ta} \phi_{C}(\ta)(y_k) d\ta,
\end{align*}
\begin{align*}
&\ex[H(u_{k+T}, \rho_{k+T})]\\=&\sum_{\lambda \in \sigma_p(\calk)} c^H_\g \g^T \phi_\g(y_k) + \int_{-\pi}^{\pi} e^{iT\ta} \phi_{H}(\ta)(y_k) d\ta.
\end{align*}
We further denote $z^\lambda_{k+t}=\g^t \phi_\g(y_k)$, $\lambda \in \sigma_p(\calk)$. Then, the optimization problem can be reformulated as
\begin{equation} \label{eqn:mpc1}
\begin{split}
    \min_{\{z^\lambda_{k+t}\}_{t=0}^T} \quad & \sum_{t=0}^{T-1} \left[ \sum_{\lambda \in \sigma_p(\calk)} (c^F_\g+c^C_\g) z^\lambda_{k+t}\right. \\ 
    &+ \left. \int_{-\pi}^{\pi} e^{it\ta} [\phi_{F}(\ta)+\phi_{C}(\ta)](y_k) d\ta \right]\\
    &+\sum_{\lambda \in \sigma_p(\calk)} (c^G_\g+c^H_\g) z^\lambda_{k+T} \\
    &+ \int_{-\pi}^{\pi} e^{iT\ta} [\phi_{G}(\ta)+\phi_H(\ta)](y_k) d\ta \\
    \text{s.t.} \quad & z^\lambda_{k+t+1}= \g z^\lambda_{k+t}, \ \ \lambda \in \sigma_p(\calk), \ t = 0,\dots, T-1, \\ 
    & \hbox{O.C.}
\end{split}
\end{equation}
Moreover, the integrals can be approximated by finite summations. More specifically, let $N$ be a large number. We have, for example for $\phi^F$,
\begin{align*}
&\int_{-\pi}^{\pi} e^{it\ta} [\phi_{F}(\ta) d]\ta \\ 
\approx \ & \frac{2\pi}{N}\sum_{n=1}^Ne^{i t \left(-\pi + n \frac{2\pi}{N}\right)} \left[\phi_{F}\left(-\pi + n \frac{2\pi}{N}\right) \right].
\end{align*}
The same applies to other functions including $\phi_C, \phi_G, \phi_H$. For, $n=1,\dots, N$, we denote 
\begin{align*}
w^{F,(n)}_{k+t}=&e^{i t \left(-\pi + n \frac{2\pi}{N}\right)} \left[\phi_{F}\left(-\pi + n \frac{2\pi}{N}\right) 
\right], \\
w^{C,(n)}_{k+t}=&e^{i t \left(-\pi + n \frac{2\pi}{N}\right)} \left[\phi_{C}\left(-\pi + n \frac{2\pi}{N}\right) 
\right],\\
w^{G,(n)}_{k+t}=&e^{i t \left(-\pi + n \frac{2\pi}{N}\right)} \left[\phi_{G}\left(-\pi + n \frac{2\pi}{N}\right) 
\right],\\
w^{H,(n)}_{k+t}=&e^{i t \left(-\pi + n \frac{2\pi}{N}\right)} \left[\phi_{H}\left(-\pi + n \frac{2\pi}{N}\right) 
\right].
\end{align*}
Since $\phi_F, \phi_C, \phi_G, \phi_H$ are the continuous eigen measures of the Koopman operator $\calk$ with respect to the corresponding functions, the dynamics of the new variables can be written by
\begin{equation}
\label{eqn:w_dynamics}
\begin{split}
    w^{F,(n)}_{k+t+1} = e^{it \frac{2\pi}{N}} w^{F,(n)}_{k+t}, \quad
    w^{C,(n)}_{k+t+1} = e^{it \frac{2\pi}{N}} w^{C,(n)}_{k+t}, \\
    w^{G,(n)}_{k+t+1} = e^{it \frac{2\pi}{N}} w^{G,(n)}_{k+t}, \quad
    w^{H,(n)}_{k+t+1} = e^{it \frac{2\pi}{N}} w^{H,(n)}_{k+t},
\end{split}
\end{equation}
for $ n = 1, \dots, N, t = 0, \dots, T-1$.
Thus, the optimization problem for model predictive control can be further as,
\begin{equation} 
\label{eqn:mpc}
\begin{split}
    \min_{z,w} \ & \sum_{t=0}^{T-1} \left[\sum_{\lambda \in \sigma_p(\calk)} (c^F_\g+c^C_\g) z^\lambda_{k+t}+ \frac{2\pi}{N} \sum_{n=1}^N [w^{F,(n)}_{k+t} + w^{C,(n)}_{k+t}]\right] \\
    & +\sum_{\lambda \in \sigma_p(\calk)} (c^G_\g+C^H_\g) z^\lambda_{k+T} + \frac{2\pi}{N}\sum_{n=1}^N [w^{G,(n)}_{k+T}+w^{H,(n)}_{k+T}] \\
    \text{s.t.} \quad & z^\lambda_{k+t+1}= \g z^\lambda_{k+t}, \quad \lambda \in \sigma_p(\calk), \ t = 0,\dots, T-1, \\
    & \text{dynamics \eqref{eqn:w_dynamics}}, \quad n = 1, \dots, N, \ t = 0, \dots, T-1, \\
    & \hbox{O.C.}
\end{split}
\end{equation}

We summarize our MPC framework based on the Koopman operator in Alg.~\ref{alg:3}.

\begin{algorithm}
Sample observation trajectory points of length $N$ using observation functions $F,G,C,H$ (add other functions if necessary) \;
Recover eigenvalues $\sigma_p(\calk)$ and the corresponding eigenfunctions $\phi_\lambda$, $\lambda \in \sigma_p(\calk)$ \;
Recover expanding coefficients $c^F_\lambda, c^G_\lambda, c^C_\lambda, c^H_\lambda$ using Alg.~\ref{alg:2} \;
Recover spectral measure $\nu_F, \nu_G, \nu_C, \nu_H$ for functions $F, G, C, H$\;
Approximate the continuous measure for the function $F$ by $\phi_F \approx \nu_F - \sum_{\lambda \in \sigma_p(\calk)} c^F_\lambda \phi_\lambda$ (same for functions $G,C,H$) \;
Compute $z_0$ from $x_0, \mu_0$ \;
\For{$k = 0, 1, 2, \dots$}{
    Formulate MPC problem \eqref{eqn:mpc} with initial condition $z_k$ \;
    $\{z^*_{\ell}, w^*_{\ell}\}_{\ell=1}^T \gets$ solve \eqref{eqn:mpc} \;
    update current state $z_{k} \gets z^*_{1}$ \;
}
\caption{Model Predictive Control Framework based on Koopman Operator}
\label{alg:3}
\end{algorithm}

In the problem \eqref{eqn:mpc}, the evolution of the system dynamics is already incorporated into the spectral decomposition of the Koopman operator. Therefore, we have a new diagonal and linear system represented by variables $z^\lambda$ to substitute the original dynamics \eqref{eqn:MFC_dynamics_2}. Besides, the evolution of the cost functions in the objective is measured in the Koopman eigenspace. So, we can use Koopman eigenfunctions and the corresponding expansion coefficients to reduce the nonlinearities, resulting in a new problem that is much easier to solve than the original mean field control problems.

\section{Conclusions}
\label{sec:conclusions}

In this paper, we proposed a stochastic Koopman operator approach to mean field control problems for systems with a large number of agents. For the analysis stochastic Koopman operator, we have adopted a spectral approach, which has seen success in both dynamical systems \cite{colbrook2024rigorous} and control \cite{Mezic2005}. This approach leads to a guarantee of convergence of the data-driven estimators to the stochastic Koopman operator in a probabilistic sense. One of the main contributions of the paper is to show that this convergence in dynamical systems will lead to the convergence of the controlled systems, thus implying that the controls derived from the estimated Koopman operator are asymptotically optimal. Subsequently, we propose a model predictive control framework based on the stochastic Koopman operator for a large family of mean field control problems. 
The mean field control problem under consideration takes a special form that was studied in~\cite{ELLIOTT20133222}, and we see the potential of the analysis in the paper being extended to mean field control in general forms, such as those studied in~\cite{MFGvMFC}. This will be an important topic for future research, along with the applications of this analysis in applications such as robotics, transportation, and biology.

\bibliographystyle{abbrv}

\begin{thebibliography}{10}

\bibitem{bick2020understanding}
C.~Bick, M.~Goodfellow, C.~R. Laing, and E.~A. Martens.
\newblock Understanding the dynamics of biological and neural oscillator
  networks through exact mean-field reductions: a review.
\newblock {\em The Journal of Mathematical Neuroscience}, 10(1):9, 2020.

\bibitem{bogachev2007measure}
V.~Bogachev.
\newblock {\em Measure Theory}.
\newblock Number v. 1 in Measure Theory. Springer Berlin Heidelberg, 2007.

\bibitem{carmona2018probabilistic}
R.~Carmona and F.~Delarue.
\newblock {\em Probabilistic Theory of Mean Field Games with Applications I \&
  II: Mean Field FBSDEs, Control, and Games}.
\newblock Probability Theory and Stochastic Modelling. Springer International
  Publishing, 2018.

\bibitem{MFGvMFC}
R.~Carmona, F.~Delarue, and A.~Lachapelle.
\newblock Control of mckean--vlasov dynamics versus mean field games.
\newblock {\em Mathematics and Financial Economics}, 7(2):131--166, 2013.

\bibitem{colbrook2024rigorous}
M.~J. Colbrook and A.~Townsend.
\newblock Rigorous data-driven computation of spectral properties of koopman
  operators for dynamical systems.
\newblock {\em Communications on Pure and Applied Mathematics}, 77(1):221--283,
  2024.

\bibitem{vcrnjaric2020koopman}
N.~{\v{C}}rnjari{\'c}-{\v{Z}}ic, S.~Ma{\'c}e{\v{s}}i{\'c}, and I.~Mezi{\'c}.
\newblock Koopman operator spectrum for random dynamical systems.
\newblock {\em Journal of Nonlinear Science}, 30:2007--2056, 2020.

\bibitem{edwards1979fourier}
R.~Edwards.
\newblock {\em Fourier Series, a Modern Introduction}.
\newblock Number v. 2 in Fourier Series, a Modern Introduction.
  Springer-Verlag, 1979.

\bibitem{elamvazhuthi2018nonlinear}
K.~Elamvazhuthi and S.~Berman.
\newblock Nonlinear generalizations of diffusion-based coverage by robotic
  swarms.
\newblock In {\em 2018 IEEE Conference on Decision and Control (CDC)}, pages
  1341--1346. IEEE, 2018.

\bibitem{elamvazhuthi2019mean}
K.~Elamvazhuthi and S.~Berman.
\newblock Mean-field models in swarm robotics: A survey.
\newblock {\em Bioinspiration \& Biomimetics}, 15(1):015001, 2019.

\bibitem{elamvazhuthi2020controllability}
K.~Elamvazhuthi, Z.~Kakish, A.~Shirsat, and S.~Berman.
\newblock Controllability and stabilization for herding a robotic swarm using a
  leader: A mean-field approach.
\newblock {\em IEEE Transactions on Robotics}, 37(2):418--432, 2020.

\bibitem{ELLIOTT20133222}
R.~Elliott, X.~Li, and Y.-H. Ni.
\newblock Discrete time mean-field stochastic linear-quadratic optimal control
  problems.
\newblock {\em Automatica}, 49(11):3222--3233, 2013.

\bibitem{fernando2021online}
M.~Fernando.
\newblock Online flocking control of uavs with mean-field approximation.
\newblock In {\em 2021 IEEE International Conference on Robotics and Automation
  (ICRA)}, pages 8977--8983. IEEE, 2021.

\bibitem{6144708}
N.~Gast, B.~Gaujal, and J.-Y. Le~Boudec.
\newblock Mean field for markov decision processes: From discrete to continuous
  optimization.
\newblock {\em IEEE Transactions on Automatic Control}, 57(9):2266--2280, 2012.

\bibitem{halmos2013measure}
P.~Halmos.
\newblock {\em Measure Theory}.
\newblock Graduate Texts in Mathematics. Springer New York, 2013.

\bibitem{hille1996functional}
E.~Hille and R.~Phillips.
\newblock {\em Functional Analysis and Semi-groups}.
\newblock Number v. 31, pt. 1 in American Mathematical Society: Colloquium
  publications. American Mathematical Society, 1996.

\bibitem{huang2021dynamic}
K.~Huang, X.~Chen, X.~Di, and Q.~Du.
\newblock Dynamic driving and routing games for autonomous vehicles on
  networks: A mean field game approach.
\newblock {\em Transportation Research Part C: Emerging Technologies},
  128:103189, 2021.

\bibitem{karlin2012first}
S.~Karlin and H.~Taylor.
\newblock {\em A First Course in Stochastic Processes}.
\newblock Elsevier Science, 2012.

\bibitem{katok1995introduction}
A.~Katok, A.~Katok, and B.~Hasselblatt.
\newblock {\em Introduction to the Modern Theory of Dynamical Systems}.
\newblock Encyclopedia of Mathematics and its Applications. Cambridge
  University Press, 1995.

\bibitem{korda2018linear}
M.~Korda and I.~Mezi{\'c}.
\newblock Linear predictors for nonlinear dynamical systems: Koopman operator
  meets model predictive control.
\newblock {\em Automatica}, 93:149--160, 2018.

\bibitem{LU2023111023}
Y.~Lu, M.~S. Squillante, and C.~W. Wu.
\newblock On the control of density-dependent stochastic population processes
  with time-varying behavior.
\newblock {\em Automatica}, 153:111023, 2023.

\bibitem{Mezic2005}
I.~Mezi{\'c}.
\newblock Spectral properties of dynamical systems, model reduction and
  decompositions.
\newblock {\em Nonlinear Dynamics}, 41(1):309--325, 2005.

\bibitem{palis2012geometric}
J.~Palis, A.~Manning, and W.~de~Melo.
\newblock {\em Geometric Theory of Dynamical Systems: An Introduction}.
\newblock Springer New York, 2012.

\bibitem{proctor2018generalizing}
J.~L. Proctor, S.~L. Brunton, and J.~N. Kutz.
\newblock Generalizing koopman theory to allow for inputs and control.
\newblock {\em SIAM Journal on Applied Dynamical Systems}, 17(1):909--930,
  2018.

\bibitem{reed1981functional}
M.~Reed and B.~Simon.
\newblock {\em I: Functional Analysis}.
\newblock Methods of Modern Mathematical Physics. Elsevier Science, 1981.

\bibitem{rudin1987real}
W.~Rudin.
\newblock {\em Real and Complex Analysis}.
\newblock Mathematics series. McGraw-Hill, 1987.

\bibitem{simon2015operator}
B.~Simon.
\newblock {\em Operator Theory}.
\newblock American Mathematical Society, 2015.

\bibitem{Sinha2020}
S.~Sinha, B.~Huang, and U.~Vaidya.
\newblock On robust computation of koopman operator and prediction in random
  dynamical systems.
\newblock {\em Journal of Nonlinear Science}, 30(5):2057--2090, 2020.

\bibitem{wanner2022robust}
M.~Wanner and I.~Mezi\'{c}.
\newblock Robust approximation of the stochastic koopman operator.
\newblock {\em SIAM Journal on Applied Dynamical Systems}, 21(3):1930--1951,
  2022.

\bibitem{EDMD2015}
M.~O. Williams, I.~G. Kevrekidis, and C.~W. Rowley.
\newblock A data--driven approximation of the koopman operator: Extending
  dynamic mode decomposition.
\newblock {\em Journal of Nonlinear Science}, 25(6):1307--1346, 2015.

\bibitem{yosida2012functional}
K.~Yosida.
\newblock {\em Functional Analysis}.
\newblock Classics in Mathematics. Springer Berlin Heidelberg, 2012.

\end{thebibliography}

\end{document}